\def\b{\beta}
\def\g{\gamma}
\def\G{\Gamma}
\def\t{\tau}
\def\d{\delta}
\def\l{\lambda}
\def\s{\sigma}
\def\f{\rightarrow}
\def\v{\vdash}
\def\et{\wedge}
\def\<{\langle}
\def\>{\rangle}
\def\F{\displaystyle\frac}
\newtheorem{theorem}{Theorem}[section]
\newtheorem{lemma}{Lemma}[section]
\newtheorem{corollary}{Corollary}[section]
\newtheorem{definition}{Definition}[section]
\newtheorem{notation}{Notation}[section]
\begin{document}

\title{A short proof that adding some permutation rules to $\b$ preserves $SN$}

\author{Ren\'e David \\
LAMA - Equipe LIMD -
 Universit\'e de Chamb\'ery\\
e-mail : rene.david@univ-savoie.fr }


\maketitle

\begin{abstract}
I show that, if a term is $SN$ for $\beta$,  it remains $SN$ when
some permutation rules  are added.
\end{abstract}

\section{Introduction}

Strong normalization  (abbreviated as $SN$) is a property of
rewriting systems that is often desired. Since about 10 years many
researchers have considered the following question : If a
$\l$-term is $SN$ for the $\b$-reduction, does it remain $SN$ if
some other reduction rules are added ?  They are mainly interested
with permutation rules they introduce to be able to delay some
$\b$-reductions in, for example, {\it let $x$ = ... in ...}
constructions or in {\it calculi with explicit substitutions}.
Here are some papers considering such permutations rules: L.
Regnier \cite{regnier}, F Kamareddine \cite{fairouz}, E. Moggi
\cite{moggi}, R. Dyckhoff and S. Lengrand \cite{lengrand}, A. J.
Kfoury  and J. B. Wells \cite{wells}, Y. Ohta and M. Hasegawa
\cite{ohta}, J. Espírito Santo \cite{jose-1} and \cite{jose-2}.

Most of these papers show that $SN$ is preserved by the addition
of the permutation rules they introduce. But these proofs are
quite long and complicated or need some restrictions to the rule.
For example the rule  $( M \ (\lambda x. N \ P ) ) \triangleright
  (\lambda x. (M \ N) \ P )$ is often restricted to the case when
  $M$ is an abstraction (in this case it is usually called
  ${assoc}$).

  I give here a very simple proof that the permutations rules
  preserve $SN$ when they are added all together and with no
  restriction. It is done as follows. I show
  that every term which is typable in the system (often called system $\cal{D}$) of
  types built with $\rightarrow$ and $\et$ is strongly normalizing for all the rules
($\beta$ and the permutation rules). Since it is well known that a
term is $SN$ for the $\beta$-rule iff it is typable in this
system, the result follows.

\section{Definitions and notations}

\begin{definition}
\begin{itemize}
  \item The set of $\l$-terms is defined by the following grammar

$${\cal M} := x\ | \ \l x.  {\cal M} \ | \ ({\cal M} \; {\cal M})$$

  \item  The set ${\cal T}$ of types is defined by the following
  grammar where  ${\cal A}$ is a set of atomic constants

$${\cal T} ::= \; {\cal A} \; \mid
{\cal T} \f {\cal T} \;  \mid {\cal T} \et {\cal T}$$

\item The typing rules are the following :

\begin{center}

$\F{}{\G , x : A \v x : A} $

\vspace{.5cm}

$\F{\G \v M : A \f B \quad \G \v N : A} {\G \v (M \; N) : B }$
\hspace{0.5cm} $\F{\G, x: A \v M : B} {\G \v \l x.M : A \f B}$

\vspace{.5cm}

$\F{\G \v M : A \et B} {\G \v M : A }$ \hspace{0.5cm} $\F{\G \v M
: A \et B} {\G \v M : B }$

\vspace{.5cm}

$\F{\G \v M : A  \quad \G \v M : B} {\G \v M : A \et B }$

\end{center}
\end{itemize}

\end{definition}

\begin{definition}
The reduction rules are the following.
\begin{itemize}
  \item $\beta$ : $(\lambda x. M \ N) \triangleright M[x:=N]$
  \item $\d$ : $(\l y.\l x. M \ N) \triangleright \l x.(\l y. M \
  N)$
\item $\g$ : $(\lambda x. M \ N \ P) \triangleright (\lambda x. (M \ P) \
N)$
   \item ${assoc}$  : $( M \ (\lambda x. N \ P ) ) \triangleright
  (\lambda x. (M \ N) \ P )$
\end{itemize}
\end{definition}

 Using
Barendregt's convention for the names of variables, we  assume
that, in $\g$ (resp. $\d$, $assoc$), $x$ is not free in $P$ (resp.
in $N$,  in $M$).

The rules $\d$ and $\g$ have been introduced by Regnier in
\cite{regnier} and are called there the $\s$-reduction. It seems
that the first formulation of {\em assoc} appears in Moggi
\cite{moggi} in the restricted case where  $M$ is an abstraction
and in a  ``{\em let ... in ...}'' formulation.

\begin{notation}
\begin{itemize}

  \item If $t$ is a term, $size(t)$ denotes its size and $type(t)$ the size of its type.
  If $t \in SN$ (i.e. every sequence of reductions starting from $t$ is finite), $\eta(t)$  denotes
   the length of the
longest reduction of $t$.
\item Let $\s$ be a substitution. We say that $\s$ is fair if the  $\s(x)$ for $x
\in dom(\s)$ all have the same type (that will be denoted as
$type(\s)$). We say that $\s \in SN$  if, for each $x \in
dom(\s)$, $\s(x) \in SN$.
\item Let $\s \in SN$ be a substitution and $t$ be a term. We denote by $size(\s,t)$ (resp. $\eta(\s,t)$) the
sum, over $x \in dom(\s)$, of $nb(t,x). size(\s(x))$ (resp.
$nb(t,x). \eta(\s(x))$)
 where $nb(t,x)$ is the number of occurrences of $x$
in $t$.
\item If $\overrightarrow{M}$ is a sequence of terms, $lg(\overrightarrow{M})$ denotes its length,
$M(i)$  denotes the $i$-th element of the sequence and
$tail(\overrightarrow{M})$  denotes $\overrightarrow{M}$ from
which the first element has been deleted.
 \item Assume $t=(H \  \overrightarrow{M})$ where $H$ is an abstraction or a variable and $lg(\overrightarrow{M})\geq
 1$.

\begin{itemize}

\item If $H$ is an abstraction (in this case we say that $t$ is $\b$-head reducible), then $M(1)$ will
be denoted as $Arg[t]$ and
 $(R' \ tail(\overrightarrow{M}))$ will
be denoted by $B[t]$ where $R'$ is the reduct of the $\beta$-redex
$(H  \ Arg[t])$.
\item  If $H=\l x. N$ and $lg( \overrightarrow{M}) \geq 2$ (in this case we say that
$t$ is $\g$-head reducible), then $(\l x. (N \ M(2))$  $ M(1) \
M(3) \ ... \ M(lg(\overrightarrow{M})))$ will be denoted by
$C[t]$.
\item If $H=\l x. \l y. N$  (in this case we say that
$t$ is $\d$-head reducible), then $(\l y. (\l x. N \ M(1)) \ M(2)
\ ... \ M(lg(\overrightarrow{M})))$ will be denoted by $D[t]$.
\item If $M(i)=(\lambda x. N \ P)$, then the term
$(\lambda x. (H \ M(1) \ ... \ M(i-1) \ N ) \ P \ M(i+1) \ ... \
M(lg(\overrightarrow{M})))$ will be denoted by $A[t, i]$ and we
say that $M(i)$ is the redex put in head position.

\end{itemize}
\item Finally, in a proof by induction, IH will denote the induction
hypothesis.

\end{itemize}

\end{notation}

\section{The theorem}

\begin{theorem}
Let $t$ be a term. Assume $t$ is strongly normalizing for $\b$.
Then $t$ is strongly normalizing for $\b$, $\d$, $\g$ and $assoc$.
\end{theorem}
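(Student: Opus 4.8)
The plan is to prove the stronger statement that every term typable in system $\cal{D}$ (types built with $\f$ and $\et$) is strongly normalizing for the combined system $\{\b, \d, \g, assoc\}$. Since the author has already noted that a term is $SN$ for $\b$ iff it is typable in $\cal{D}$, this immediately yields the theorem. The advantage of passing to typability is that types give us a well-founded measure — the size $type(t)$ of the type — which will interact well with the permutation rules, since all four rules preserve typability (one checks this by a routine induction on the typing derivation, using the $\et$-rules to handle the new positions of subterms).

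The core of the argument will be a lemma, proved by induction, stating something like: if $\s \in SN$ is a fair substitution and $t$ is a term such that $t[\s] $ remains typable, then $t[\s] \in SN$. The natural induction is on a lexicographically-ordered triple of measures built from the notation already set up: the type size $type(\s)$ of the substitution, the reduction length $\eta(\s,t) = \sum_x nb(t,x)\cdot\eta(\s(x))$ together with $\eta$ of the subterms of $t$, and the size $size(\s,t)$. The idea is that substitution into a term is controlled by how the substituted $SN$ terms behave, and each permutation rule rearranges the term without increasing the governing measure. First I would set up the base cases (variables and abstractions reduce to the subterm case), then analyze a redex $t=(H\ \overrightarrow{M})$.

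The heart of the proof is the case analysis on the possible head reductions of $(H\ \overrightarrow{M})$, using exactly the notations $B[t]$, $C[t]$, $D[t]$, $A[t,i]$ the author has defined. For each reduct I must show it is again $SN$ by appealing to the IH, checking that the relevant measure strictly decreases. The delicate point is the $\b$-case: firing the head redex $(H\ Arg[t])$ substitutes $Arg[t]$ for $x$ inside the body, which is precisely why the substitution machinery (fairness, $size(\s,t)$, $\eta(\s,t)$) is needed — one folds the fired $\b$-redex into the substitution $\s$ and applies the IH with a substitution of strictly smaller type, so that $type(\s)$ drops. The permutation rules $\d,\g,assoc$ do not touch types and do not fire a $\b$-redex, so for those the argument must rely on the $\eta$/$size$ components of the measure decreasing; here $assoc$ is the rule to watch, since it pulls a redex $(\l x.N\ P)$ that was an argument into head position via $A[t,i]$, and one must verify this genuinely reduces the chosen measure rather than merely permuting complexity around.

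I expect the main obstacle to be precisely this: finding the single well-founded measure that simultaneously decreases under all four rules, with $assoc$ in its full unrestricted generality. The reason earlier proofs in the literature were long or needed the restriction that $M$ be an abstraction is exactly that $assoc$ can relocate arbitrary subterms, and a naive size argument fails because the relocated redex may later duplicate material. The resolution should be that the lexicographic combination of $type(\s)$ (strictly decreasing only at genuine $\b$-steps) and the $\eta$-based quantities (controlling the permutation steps, which cannot increase the $\b$-reduction length because they commute appropriately with $\b$) is enough; the technical work will be verifying the commutation-style inequalities $\eta(C[t]),\eta(D[t]),\eta(A[t,i]) \leq \eta(t)$ and that a strict decrease occurs in at least one coordinate at each step.
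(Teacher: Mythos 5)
Your skeleton is the paper's: reduce to typability in system $\cal{D}$, prove a substitution theorem for fair $SN$ substitutions by induction on a lexicographic measure headed by $type(\s)$, and run the case analysis through $Arg$, $B[\cdot]$, $C[\cdot]$, $D[\cdot]$ and $A[\cdot,i]$. The genuine gap is in how you propose to control the permutation rules. You plan to verify ``commutation-style inequalities'' asserting that $\g$, $\d$ and $assoc$ do not increase the longest $\b$-reduction length. Nothing of the sort appears in the paper, and for good reason: for $\g$ and $\d$ that statement is essentially Regnier's theorem (the content of an entire paper), and for unrestricted $assoc$ no such clean commutation is available --- this is precisely why the earlier proofs were long or restricted $M$ to an abstraction. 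As written, your plan pushes the whole difficulty into an unproved, and doubtful, lemma.

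The paper's way out, which you miss, is a different division of labour. The main substitution theorem does not hypothesize that $t[\s]$ is typable; it hypothesizes that $t$ is $SN$ \emph{for the full system} $\{\b,\d,\g,assoc\}$, and this hypothesis is supplied from outside by a trivial induction on $size$ in the corollary, writing a typable application as $(x\ y)[x:=u][y:=v]$ with $(x\ y)$ a normal form and $u,v$ already known to be $SN$. Consequently $\eta(t)$ in the measure is the reduction length for the full system, and it strictly decreases --- with no commutation argument whatsoever --- whenever the head reduction of $\s(t)$ is the image of a reduction of $t$ itself. The only head redexes that cannot be lifted back to $t$ are those created by the substitution ($H$ a variable with $\s(H)$ an abstraction or an application), and every one of those, for all four rules and not only $\b$, is discharged by the strict drop of $type(\s)$, together with two auxiliary facts: $(z\ \overrightarrow{P})\in SN$ for subsequences $\overrightarrow{P}$ of $\s(\overrightarrow{M})$, and $(t\ \overrightarrow{u})\in SN$ implies $(\l x.t\ x\ \overrightarrow{u})\in SN$. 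Until the commutation inequalities are replaced by this (or an equivalent) mechanism, the proposal does not go through.
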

\begin{proof}
This follows immediately from Theorem \ref{D} and corollary
\ref{cor} below.
\end{proof}

\begin{theorem}\label{D}
A term is $SN$ for the $\b$-rule iff it is typable in  system
$\cal{D}$.
\end{theorem}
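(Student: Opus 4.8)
The plan is to prove both directions of this well-known characterization, since the theorem is an "iff" relating strong normalization for $\b$ to typability in system $\cal{D}$ (intersection types without $\top$).
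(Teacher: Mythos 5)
There is no proof here: your ``proposal'' only restates the goal (prove both implications) without supplying an argument for either one, so it cannot be assessed as correct or incorrect --- it is simply missing. To make it a proof you would need, at minimum, the following content. For the \emph{only if} direction (the only one the paper actually spells out, citing \cite{moi}), the standard argument is an induction on the pair $\langle\eta(t),\mathit{size}(t)\rangle$, where $\eta(t)$ is the length of the longest $\b$-reduction of $t$, splitting on the shape of $t$: an abstraction (immediate from IH), a variable applied to arguments $(x\;v_1\ldots v_n)$ (type each $v_j$ by IH and use $\et$ to merge the types assigned to $x$), or a head redex $(\l x.a\;b\;\overrightarrow{c})$ (type the reduct $(a[x:=b]\;\overrightarrow{c})$ by IH, then give $x$ and $b$ the intersection $A_1\et\ldots\et A_n$ of the types of the occurrences of $b$ in that typing, handling separately the case where $x$ does not occur in $a$, which needs $b$ typable on its own --- this is exactly where the decrease in $\mathit{size}$ rather than $\eta$ is used). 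For the \emph{if} direction you would need a genuine normalization argument (reducibility candidates, or the substitution-based Theorem~\ref{main}/Corollary~\ref{cor} of this paper); note the paper deliberately does not reprove it here, since Corollary~\ref{cor} delivers it as a byproduct and only the ``only if'' direction is used in the main result. As it stands, your submission identifies the theorem as well known but proves nothing.
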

\begin{proof}
This is a classical result. For the sake of completeness I recall
here the proof of the only if direction given in  \cite{moi}. Note
that it is the only direction that is used in this paper and that
corollary \ref{cor} below actually gives the other direction. The
proof is by induction on $\langle\eta(t),size(t)\rangle$.

-  If $t=\lambda x\;u.$ This follows immediately from the IH.

- If $t=(x\;v_{1}\;...\;v_{n})$.
 By the IH, for every $j$, let $x:A_{j},\Gamma _{j}\vdash v_{j}:B_{j}$. Then $x:\bigwedge
 A_{j}\et
(B_{1},...,B_{n}\rightarrow C),$ $\bigwedge \Gamma _{j}\vdash t:C$
where $C$ is any type, for example any atomic type.

-  If $t=(\lambda x.a \;b\;\overrightarrow{c})$. By the IH,
$(a[x:=b]\;\overrightarrow{c})$ is typable.  If $x$ occurs in $a$,
let $A_{1}\;...\;A_{n}$ be the types of the occurrences of $b$ in
the typing of $(a[x:=b]\;\overrightarrow{c})$. Then $ t $ is
typable by giving to $x$ and $b$ the type $A_{1}\;\et ...\;\et
A_{n}$. Otherwise, by the induction hypothesis $b$ is typable of
type $B$ and then $t$ is typable by giving to $x$ the type $B$.
\end{proof}

{\it From now on, $\triangleright$ denotes the reduction by one of
the rules $\b$, $\d$, $\g$ and $assoc$}.

\begin{lemma}\label{prepa}
\begin{enumerate}
\item The system satisfies subject reduction i.e. if $\G \v t : A$ and $t \triangleright t'$ then
$\G \v t' : A$.

\item If $t \triangleright t'$ then $t[x:=u] \triangleright t'[x:=u]$.
  \item If   $t'=t[x:=u] \in SN$ then  $t \in SN$
  and $\eta(t) \leq \eta(t')$.
\end{enumerate}
\end{lemma}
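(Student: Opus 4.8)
The three claims in Lemma \ref{prepa} are of increasing difficulty, so I would treat them in order and spend most of the effort on the third.

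For the first part, \emph{subject reduction}, the plan is to check the four rules $\b$, $\d$, $\g$, $assoc$ one at a time. For each rule I would take an arbitrary typing derivation of the redex, read off the types assigned to the immediate subterms (using that, because of the $\et$-rules, a term may carry several types along the way), and exhibit a derivation of the contractum with the same final type. The case of $\b$ is the usual substitution lemma: a derivation of $(\l x.M\ N):B$ yields $\G,x:A\v M:B$ and $\G\v N:A$ for some $A$, whence $\G\v M[x:=N]:B$. The three permutation rules $\d$, $\g$, $assoc$ only rearrange applications and abstractions, so in each case I would simply permute the corresponding application/abstraction steps in the derivation; the Barendregt side conditions ($x\notin P$ in $\g$, etc.\ ) are exactly what guarantees the rearranged derivation is still well formed. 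This part is routine once the subterm typings are extracted.

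For the second part, that substitution commutes with reduction, the key observation is that none of the four rules binds or captures the variable $u$ is substituted for, and that the redex pattern of each rule is preserved under substituting a term $u$ for a free variable $x$ (again using Barendregt's convention to avoid capture). Concretely, if the redex contracted in $t\tr t'$ occurs at some position, then applying $[x:=u]$ leaves an occurrence of the same redex at the corresponding position of $t[x:=u]$, and contracting it produces exactly $t'[x:=u]$; for the $\b$ case this reduces to the standard substitution-composition identity $M[y:=N][x:=u]=M[x:=u][y:=N[x:=u]]$. So a single case analysis on which rule fires suffices.

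The third part is the real content, and I expect it to be the main obstacle. The statement is that reductions of $t$ can be \emph{lifted} against the substitution: from $t[x:=u]\in SN$ I must conclude $t\in SN$ with no increase in the length of the longest reduction. The natural approach is to show that every one-step reduction $t\tr t_1$ induces a corresponding reduction $t[x:=u]\tr t_1[x:=u]$ \emph{of the same rule}, which is precisely what part (2) provides; this gives an injection of reduction sequences out of $t$ into reduction sequences out of $t[x:=u]$, so any infinite reduction from $t$ would yield an infinite one from $t[x:=u]$, contradicting $t[x:=u]\in SN$, and moreover $\eta(t)\le\eta(t')$. The delicate point is to be sure that distinct reductions from $t$ map to genuinely distinct reductions from $t'=t[x:=u]$ and that no reduction from $t$ is ``absorbed'' by the substitution, so that the length is genuinely preserved and not merely bounded; here one must check that the redex contracted in $t$ is never erased or duplicated in a way that the substitution could hide, which once again follows because $[x:=u]$ only replaces the free variable $x$ and does not create or destroy the application/abstraction structure on which the redexes sit. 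With part (2) in hand this lifting is immediate, so the whole of Lemma \ref{prepa} rests on the careful case analysis establishing (2).
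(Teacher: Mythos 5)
Your proof is correct and fills in exactly the routine case analyses that the paper dismisses with the single word ``Immediate''. The only remark worth making is that your worry in part (3) about distinct reductions from $t$ mapping to genuinely distinct reductions from $t'$ is unnecessary: the statement only claims the bound $\eta(t)\le\eta(t')$, and for that it suffices that every length-$n$ reduction sequence out of $t$ lifts via part (2) to \emph{some} length-$n$ reduction sequence out of $t'=t[x:=u]$.
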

\begin{proof}
Immediate.
\end{proof}

\begin{lemma}\label{cs-sn}
Let $t=(H \  \overrightarrow{M})$ be such that $H$ is an
abstraction or a variable and $lg(\overrightarrow{M})\geq 1$.
Assume that
\begin{enumerate}
\item If $t$ is $\d$-head reducible (resp. $\g$-head reducible, $\b$-head reducible), then $D[t] \in SN$
(resp. $C[t] \in SN$, $Arg[t], B[t] \in SN$).

\item For each $i$ such that $M(i)$is a redex, $A[t,i] \in SN$,

\end{enumerate}
Then $t \in SN$.
\end{lemma}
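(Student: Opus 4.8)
The plan is to use the basic characterization that a term is $SN$ iff every one of its one-step reducts is $SN$, and then to prove by a well-founded induction that all reducts of $t=(H\ \overrightarrow{M})$ are $SN$. So I would first catalogue the one-step reducts of $t$: they are exactly (a) the prescribed head-reducts $B[t]$, $C[t]$, $D[t]$ (whichever of these are defined, according to whether $t$ is $\beta$-, $\gamma$- or $\delta$-head reducible) and the $assoc$-reducts $A[t,i]$ for those arguments $M(i)$ that are redexes; and (b) the \emph{internal} reducts, obtained by reducing inside $H$ or inside a single $M(i)$, which yield a term $(H'\ \overrightarrow{M})$ or $(H\ \dots\ M(i)'\ \dots)$ of the same applicative shape. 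By hypotheses (1) and (2) all the reducts of type (a) are already $SN$, so the whole difficulty is concentrated in the internal reducts.

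Before the induction I would record that the hypotheses force $H$ and every $M(i)$ to be $SN$, using Lemma \ref{prepa}(3) together with the fact that a subterm of an $SN$ term is $SN$: when $H=\l x.N$ is an abstraction, $B[t]\in SN$ makes its head $N[x:=M(1)]$ be $SN$, hence $N$ (and so $H$) is $SN$ by Lemma \ref{prepa}(3), the tail arguments $M(2),\dots,M(n)$ are $SN$ as subterms of $B[t]$, and $Arg[t]=M(1)\in SN$ by hypothesis; when a given $M(i)$ is a redex, $A[t,i]\in SN$ yields $M(i)\in SN$ in the same way. Once the components are $SN$, the reduction lengths $\eta(H),\eta(M(i))$ are all finite and I would run the induction on a measure built from them, refined lexicographically by $lg(\overrightarrow{M})$ and $size(t)$ (and, as explained below, really anchored on $\eta$ of the prescribed reducts). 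For an internal reduct $t'$ it then suffices, by the IH, to re-establish hypotheses (1) and (2) for $t'$, and for the ``old'' redexes this is routine: I would show that each prescribed reduct of $t'$ is a $\triangleright$-reduct of the corresponding prescribed reduct of $t$ and hence $SN$ — for instance $B[t]\triangleright^{*}B[t']$, reducing the copies of $M(1)$ through the substitution via Lemma \ref{prepa}(2) when the reduction is in $M(1)$, or reducing $M(i)$ in place otherwise, and similarly for $C[t]$, $D[t]$ and the $A[t,j]$.

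The main obstacle is the creation of \emph{new} redexes by the internal reduction, where this ``reduct of a reduct'' bookkeeping breaks down. Two cases occur: reducing inside $H=\l x.N$ may turn $N$ into an abstraction, so that $t'$ becomes $\delta$-head reducible although $t$ was not, forcing me to prove $D[t']\in SN$ although $D[t']$ is not a reduct of any prescribed $SN$ term of $t$; symmetrically, reducing inside some $M(i)$ may turn it into a $\beta$-redex, producing a brand new $assoc$-reduct $A[t',i]$. In each case $D[t']$ (respectively $A[t',i]$) only \emph{shares a common reduct} with the head reduct $B[t]$ rather than lying below it, so that a naive measure such as $\eta(H)+\sum_i\eta(M(i))$ need not strictly decrease, because the head $\beta$-step can duplicate $M(1)$. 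I would resolve this by applying the lemma recursively to $D[t']$ (respectively $A[t',i]$), which is again of the form $(H''\ \overrightarrow{M''})$, having chosen the leading component of the induction measure to be $\eta(B[t])$ — the reduction length of the prescribed head reduct, which already absorbs the duplicating substitution — rather than the raw sum over the components. The crux is then to verify, using Lemma \ref{prepa}, that the prescribed head reduct of $D[t']$ (and of $A[t',i]$) is reached from $B[t]$ in at least one step, so that the measure strictly drops; this commutation of the newly created head/argument redex with the prescribed head reduction is the technical heart of the argument, while the remaining cases are a routine inspection of which redexes $t$ and its internal reducts admit.
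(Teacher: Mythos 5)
Your overall strategy coincides with the paper's: the paper proves this lemma by induction on $\eta(H)+\sum\eta(M(i))$, showing that every one-step reduct of $t$ is $SN$ (the prescribed reducts by hypothesis, the internal ones via the induction hypothesis), and you correctly identify that the delicate point, which the paper's two-sentence proof does not spell out, is the creation of new head- or argument-redexes by an internal step. However, your resolution of that point does not work as stated. Take the case where $H=\l x.N$, $N$ is not an abstraction, and an internal step turns $N$ into $\l y.N''$, so that $t'=(\l x.\l y.N''\ \overrightarrow{M})$ is newly $\d$-head reducible. You propose to anchor the measure on $\eta(B[t])$ and to argue that the head reduct of $D[t']$ is reached from $B[t]$ in at least one step. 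It is not: $B[D[t']]=((\l x.\,N''[y:=M(2)]\ M(1))\ M(3)\ \dots\ M(n))$ still carries $M(1)$ as the explicit argument of an unfired $\b$-redex, whereas in $B[t]=(N[x:=M(1)]\ M(2)\ \dots\ M(n))$ the substitution of $M(1)$ has already been performed; since reduction cannot undo a substitution, $B[D[t']]$ is not in general a reduct of $B[t]$ --- the two terms merely have a common reduct, exactly as you yourself observe two sentences earlier about $D[t']$. The same objection applies to $B[A[t',i]]$ for a newly created argument redex. So the leading component $\eta(B[t])$ of your measure does not strictly decrease (the two values are not even comparable) in precisely the cases it was introduced to handle, and the induction does not close.

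A secondary gap concerns your preliminary claim that the hypotheses force every $M(i)$ to be $SN$. This is justified when $H$ is an abstraction (via $B[t]$ and $Arg[t]$), but when $H$ is a variable and $M(i)$ is not a redex, hypotheses (1) and (2) say nothing at all about $M(i)$, so $\eta(M(i))$ need not be defined and your measure is not well founded; in the paper this is harmless only because the lemma is invoked after the components have been shown $SN$ by other means. Moreover, even when $M(i)=(\l x.N_0\ P)$ is a redex, $A[t,i]\in SN$ gives you $N_0,P\in SN$ as subterms, but concluding $(\l x.N_0\ P)\in SN$ from this is itself an instance of the very lemma being proved, so ``in the same way'' hides a circularity. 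To repair the argument you need to treat the newly created $D[t']$ and $A[t',i]$ by a genuinely different device (for instance a nested application of the lemma to these terms under a finer, demonstrably decreasing measure, or a strengthening of hypothesis (1) that is stable under internal reduction), rather than by trying to place their head reducts below $B[t]$ in the reduction ordering.
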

\begin{proof}
By induction on $\eta(H) + \sum \eta(M(i))$. Show that each reduct
of $t$ is in $SN$.
\end{proof}

\begin{lemma}\label{prepa2}
If $(t \ \overrightarrow{u}) \in SN$ then $(\lambda x. t \ x \
\overrightarrow{u}) \in  SN$.
\end{lemma}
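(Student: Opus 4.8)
The plan is to apply Lemma~\ref{cs-sn} to $s=(\lambda x.\,t\ x\ \overrightarrow u)$, viewed as $(H\ \overrightarrow M)$ with abstraction head $H=\lambda x.t$ and argument list $\overrightarrow M=(x,\overrightarrow u)$, and to discharge its hypotheses by a well-founded induction. The decisive observation is that $s$ differs from $(t\ \overrightarrow u)$ only by the single ``innocuous'' redex $(\lambda x.\,t\ x)$, whose argument is exactly the bound variable: since $t[x:=x]=t$, contracting it neither duplicates nor erases any subterm and simply gives back $(t\ \overrightarrow u)$. In particular $B[s]=(t\ \overrightarrow u)$, which is $SN$ by hypothesis, so the $\b$-head requirement is free; moreover $Arg[s]=x$ is a variable, hence in $SN$, and each $u_i$ is a subterm of $(t\ \overrightarrow u)$, hence in $SN$. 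I take $\eta(t\ \overrightarrow u)$ as the primary induction measure, which is legitimate because $(t\ \overrightarrow u)\in SN$.

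The permutation reducts are governed by the same principle. When $s$ is $\g$-head reducible, $C[s]=(\lambda x.(t\ u_1)\ x\ u_2\ \ldots\ u_n)$ is again of the shape of the statement, now with body $(t\ u_1)$ and tail $(u_2,\dots,u_n)$, and its underlying application $((t\ u_1)\ u_2\ \ldots\ u_n)$ is \emph{literally} $(t\ \overrightarrow u)$; similarly, for a redex argument $u_i=(\lambda z.N\ P)$ the $assoc$ reduct $A[s,i]$ still carries the same displaced redex, and contracting it yields the corresponding $assoc$-reduct of $(t\ \overrightarrow u)$. Thus every such step either leaves $(t\ \overrightarrow u)$ unchanged (the $\g$ steps) or replaces it by a strict $\triangleright$-reduct (the $assoc$ step, which strictly lowers $\eta$); for the $\eta$-preserving steps I use a secondary measure $\mu$ counting the arguments standing to the right of the displaced redex together with the number of leading abstractions it can still cross, which strictly decreases. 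Lemma~\ref{prepa} is what guarantees that these contractions commute with substitution and never increase $\eta$.

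The main obstacle is the $\d$-head case, where $t=\lambda y.N$ and $D[s]=(\lambda y.(\lambda x.N\ x)\ \overrightarrow u)$: the displaced redex $(\lambda x.N\ x)$ has migrated \emph{inside} the abstraction $\lambda y$, so $D[s]$ is no longer literally of the form $(\lambda x.\,\cdot\ x\ \cdots)$ and the statement cannot be re-applied to it verbatim. I plan to overcome this by strengthening the induction hypothesis so that it covers exactly the terms arising here, namely those whose sole extra redex is an innocuous redex $(\lambda x.\,e\ x\ \overrightarrow w)$ (argument after $\lambda x$ equal to the bound variable, and $x$ not occurring in $e$) and whose contraction of that redex produces a $\triangleright$-reduct of the fixed $SN$ term $(t\ \overrightarrow u)$. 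Under this strengthened statement the $\d$ step merely pushes the redex past one leading $\lambda$, strictly decreasing $\mu$ (the leading-abstraction count drops) while keeping the underlying application unchanged, so the lexicographic induction on $\langle \eta(t\ \overrightarrow u),\,\mu\rangle$ goes through; one then checks, routinely, that the $\b$-, $\g$- and $assoc$-reducts of such a generalized term stay inside the class, with either a strictly smaller $\eta$ or a strictly smaller $\mu$, and Lemma~\ref{cs-sn} delivers $s\in SN$. I expect the only genuinely delicate point to be pinning down the precise closed class of terms and the secondary measure; all the individual verifications are then the routine book-keeping already packaged by Lemmas~\ref{prepa} and~\ref{cs-sn}.
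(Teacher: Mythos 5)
Your plan is correct, and its skeleton coincides with the paper's: the paper also proves the strengthened statement that if $v\in SN$ and $F(v)$ is obtained by replacing a subterm $e$ on the left branch of $v$ by $(\lambda x.\,e\ x)$, then $F(v)\in SN$ --- exactly the closed class you arrive at in order to absorb the $\delta$-case, where the created redex migrates under the leading abstraction --- and it likewise discharges the hypotheses of Lemma~\ref{cs-sn} by noting that every head step either projects to a strict reduct of $v$ (so the $\eta$ component drops, as for $\beta$ and $assoc$) or is a $\delta$ or $\gamma$ step on the created redex itself. Where you genuinely diverge is the tie-breaker for these $\eta$-preserving steps: the paper inducts on $\langle type(e),\eta(v),size(v)\rangle$ and observes that the type of the modified subterm strictly decreases ($e$ becomes $N$ when $e=\lambda y.N$, or becomes $(e\ M)$), whereas you use the purely syntactic count $\mu$ (arguments to the right of the created redex plus leading abstractions of its body). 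Your $\mu$ does strictly decrease under $\delta$ and $\gamma$ applied to that redex, and it only needs to decrease there, since the one scenario in which it grows --- an outer $\gamma$ re-exposing arguments to the redex after a $\delta$ has buried it --- projects to a genuine $\gamma$-reduct of $v$ and is absorbed by the primary component. The trade-off: your measure yields a type-free proof valid for arbitrary untyped $SN$ terms, while the paper's argument presupposes a typing of the term (harmless in context, where the lemma is only invoked on typed terms, but less self-contained). The verifications you defer are indeed the routine projections already packaged by Lemmas~\ref{prepa} and~\ref{cs-sn}.
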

\begin{proof}
This is a special case of the following result. If $t \in SN$ then
so is $F(t)$ where  $F(t)$ is obtained in  the following way:
choose a node on the left branch of $t$ and replace the sub-term
$u$ at this node by $(\l x. u \ x)$. The proof is by induction on
$\langle type(u), \eta(t), size(t)\rangle$, using Lemma
\ref{cs-sn}. The only non immediate cases are when the head redex
has been created by the transformation $F$. The case of $\b$ is
trivial. For $\d$ and $\g$, the result follows from the fact that
the type of the sub-term modified has decreased and there is
nothing to prove for {\em assoc} since the the change is in the
left branch.
\end{proof}

\begin{theorem}\label{main}
Let $t\in SN$ and $\s \in SN$ be a fair substitution. Then $\s(t)
\in SN$.
\end{theorem}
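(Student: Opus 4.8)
The plan is to prove $\s(t) \in SN$ by a nested induction. Since $\s$ is fair, let me write $T = type(\s)$ for the common type of all the $\s(x)$. The natural induction measure combines information about both $t$ and $\s$. Looking at the available tools — especially Lemma \ref{cs-sn}, which reduces membership in $SN$ to checking that the various ``head contractions'' $D[t], C[t], Arg[t], B[t]$ and the ``put in head position'' terms $A[t,i]$ are in $SN$ — the strategy will be to induct on a triple ordered lexicographically, something like $\langle T, \eta(t), size(t)\rangle$ or $\langle type(\s), \eta(\s,t)+\eta(t), size(t)\rangle$. The type $T$ of the substitution comes first because the dangerous case is when substitution creates a new $\b$-redex: replacing a variable $x$ by $\s(x)$ can turn $(x \; \overrightarrow{M})$ into a redex only if $\s(x)$ is an abstraction, and the created redex then has a strictly smaller type than $T$, allowing the outer induction on $T$ to fire.

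\medskip

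First I would write $t = (H \; \overrightarrow{M})$ when $t$ is an application (the cases $t = x$ and $t = \l x.u$ being handled directly or by the inner induction), and aim to apply Lemma \ref{cs-sn} to $\s(t) = (\s(H) \; \s(\overrightarrow{M}))$. So I must verify the two hypotheses of that lemma for $\s(t)$. For hypothesis (2), each $A[\s(t),i]$ comes from a redex $\s(M(i))$; such a redex is either the image of a redex already in $t$ — handled by the inner induction since the relevant subterm is structurally simpler — or it is $\s(x)$ itself when $H$ or some $M(j)$ exposes it, which is in $SN$ because $\s \in SN$, again with controlled measure. For hypothesis (1), the head-contraction terms $C[\s(t)], D[\s(t)], Arg[\s(t)], B[\s(t)]$ must be shown $SN$: each is either $\s$ applied to the corresponding contraction of $t$ (when the head redex of $\s(t)$ descends from one already present in $t$, so $t$ itself head-contracts and the inner IH on $\eta(t)$ applies), or it is a genuinely new redex created by the substitution.

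\medskip

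The main obstacle is precisely this last situation: a head redex of $\s(t)$ that exists only because $H = x$ with $\s(x) = \l z.\, r$ an abstraction, so that $(\s(x) \; \s(M(1)) \; \dots) $ is $\b$-head reducible even though $(x \; \overrightarrow{M})$ was not. Here $Arg[\s(t)] = \s(M(1))$ and $B[\s(t)] = (r[z:=\s(M(1))] \; \s(M(2)) \dots)$; the key point is that $r[z := \s(M(1))]$ can be viewed as the result of a \emph{new} substitution whose relevant type — the type of the bound variable $z$ — is strictly smaller than $T$, the type of $x$ (since $\s(x) : T$ forces $T = (\text{type of } z) \to \dots$). This is exactly what lets the outermost induction on $type(\s)$ decrease. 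Similarly, new $\d$- or $\g$-head redexes created by substituting an abstraction are controlled the same way, since their contracta again involve substitution at a smaller type, mirroring the mechanism already used in the proof of Lemma \ref{prepa2}. Once every hypothesis of Lemma \ref{cs-sn} is discharged against one of these three decreasing measures, the lemma yields $\s(t) \in SN$ and the induction closes.
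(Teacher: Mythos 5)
Your skeleton coincides with the paper's: an induction led by $type(\s)$, an application of Lemma \ref{cs-sn} to $\s(t)=(\s(H)\ \overrightarrow{N})$, and a split according to whether each head redex of $\s(t)$ is inherited from $t$ or created by the substitution. The gap is in the mechanism you give for the created redexes. You claim that substitution can create a head redex only when $\s(H)$ is an abstraction, and that the resulting contractum always involves a substitution at a type strictly smaller than $type(\s)$. Neither claim holds for the notion of head reducibility in play here. If $H$ is a variable and $\s(H)$ is an \emph{application} --- say $\s(H)=(\l y.a\ b)$, or $(R\ \overrightarrow{M'})$ with $R$ a $\b$-redex, or $(H'\ \overrightarrow{M'})$ with some $M'(j)$ a redex --- then $\s(t)$ becomes $\g$- or $\b$-head reducible, or acquires new obligations $A[\s(t),i]$, even though nothing of smaller type has been substituted: the contractum is naturally written as $\t(t')$ where $\t$ extends $\s$ by sending a fresh variable of the \emph{same} type $type(\s)$ to a reduct of $\s(H)$. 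In these cases none of $type(\s)$, $\eta(t)$, $size(t)$ decreases, so your induction does not close.

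This is precisely why the paper's measure is the quintuple $\langle type(\s),\eta(t),size(t),\eta(\s,t),size(\s,t)\rangle$: in the cases above it is the weighted residual reduction length $\eta(\s,t)$ that strictly drops (a reduct of $\s(H)$ replaces $\s(H)$), while in the $\d$-case $H=\l x.z$, $\s(z)=\l y.a$ it is $size(\s,t)$ that drops (a variable leaves the domain of the substitution). Both of your candidate triples fail: $\langle type(\s),\eta(t),size(t)\rangle$ is unchanged in all of these cases, and $\langle type(\s),\eta(\s,t)+\eta(t),size(t)\rangle$ is unchanged in the $\d$-case when $\s(z)$ is a normal form. A complete write-up would also need Lemma \ref{prepa2} at the two points where an argument must be re-threaded under a $\l$ (the new $\g$-head redex when $\s(H)=\l y.a$, and the new internal redex arising from $M(j)=(x\ a)$ with $\s(x)=\l y.b$); your sketch does not address these.
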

\begin{proof}
By induction on $\langle type(\s),\eta(t),size(t), \eta(\s,t),
size(\s,t) \rangle$. If $t$ is an abstraction or a variable the
result is trivial. Thus assume $t=(H \ \overrightarrow{M})$ where
$H$ is an abstraction or a variable and
$n=lg(\overrightarrow{M})\geq 1$. Let
$\overrightarrow{N}=\s(\overrightarrow{M})$.

\noindent {\em Claim} : Let $\overrightarrow{P}$ be a (strict)
initial or
a final sub-sequence of $\overrightarrow{N}$. Then $(z \ \overrightarrow{P}) \in SN$. \\
{\em Proof} : This follows immediately from Lemma \ref{prepa} and
the IH. \hspace{2.8cm}$\Box$

\medskip

We use Lemma \ref{cs-sn} to show that $\s(t) \in SN$.
\begin{enumerate}
\item Assume $\s(t)$ is $\d$-head reducible. We have to show that
$D[\s(t)] \in SN$. There are 3 cases to consider.
\begin{enumerate}
  \item If $t$ was already $\d$-head reducible, then
  $D[\s(t)]=\s(D[t])$ and the result follows from the IH.
  \item If $H$ is a variable and $\s(H)=\l x. \l y.a$, then
  $D[\s(t)]=t'[z:=\l y. (\l x.a \ N(1))]$ where $t'=(z\ tail(\overrightarrow{N}))$.
  By the claim, $t'\in SN$ and since $type(z) <
  type(\s)$ it is enough to check that $\l y. (\l x.a \ N(1))
  \in SN$. But this is $\l y. (z' \ N(1))[z':=\l x. a]$. But, by the claim, $(z' \ N(1))\in SN$ and we conclude by the IH since
  $type(z') < type(\s)$.
  \item If $H=\l x. z$ and $\s(z)=\l y. a$, then $D[\s(t)]=(\l y. (\l x. a \ N(1)) \ tail(\overrightarrow{N}))
  =\t(t')$ where $t'=(z' \ tail(\overrightarrow{M}))$ and $\t$ is the same as $\s$ on the variables of
 $tail(\overrightarrow{M})$ and $\t(z')=\l y.(\l x. a \ N(1))$. By the IH, it is enough to show
  that $(\l x. a \ N(1))\in SN$. But this is $(\l x. z'' \
  N(1))[z'':=a]$ and, since $type(a) < type(\s)$ it is enough to
  show that $u=(\l x. z'' \ N(1))=\s'(t'') \in SN$ where $t''$ is a sub-term of $t$ (up to
  the renaming of $z$ into $z''$) and $\s'$ is as $\s$ but $z'' \not\in dom(\s')$.
This follows from the IH since $size(\s',t'')<size(\s,t) $.
\end{enumerate}

\item Assume $\s(t)$ is $\g$-head reducible. We have to show that
$C[\s(t)] \in SN$. There are 4 cases to consider.

\begin{enumerate}
  \item If $H$ is an abstraction, then $C[\s(t)]=\s(C[t])$ and the result follows immediately from
the IH.
\item  $H$ is a variable and $\s(H)=\l y. a$, then
$C[\s(t)]=(\l y.(a\ N(2))\ N(1) \ N(3)$ $... \ N(n))= (\l y.(a\
N(2))\ y \ N(3)$ $ ... \ N(n))[y:=N(1)]$. Since $type(N(1)) <
type(\s)$, it is enough, by the IH, to show $(\l y.(a\ N(2))\ y \
N(3)$ $... \ N(n)) \in SN$ and so, by Lemma \ref{prepa2}, that
$u=(a\ N(2) \ N(3) \ ... \ N(n)) \in SN$.  By the claim,  $(z \
tail(\overrightarrow{N})) \in SN$ and the result follows from the
IH since $u=(z \ tail(\overrightarrow{N}))[z:=a]$ and $type(a) <
type(\s)$.
\item $H$ is a variable and $\s(H)=(\l y. a \ b)$,  then
$C[\s(t)]=(\l y.(a\ N(1))\ b$ $N(2) \ ... \ N(n))=(z \
tail(\overrightarrow{N}))[z:= (\l y.(a\ N(1))\ b)]$. Since
$type(z) < type(\s)$, by the IH it is enough to show that $u=(\l
y.(a\ N(1))\ b) \in SN$. We use Lemma \ref{cs-sn}.

- We first have to show that $B[u]\in SN$. But this is $(a[y:=b] \
N(1))$ which is in $SN$ since $u_1=(a[y:=b] \ \overrightarrow{N})
\in SN$ since $u_1=\t(t_1)$ where  $t_1$ is the same as $t$ but
where we have given to the variable $H$  the fresh name $z$, $\t$
is the same as $\s$ for the variables in $dom(\s)$ and
$\t(z)=a[y:=b]$ and thus we may conclude by the IH since $\eta(\t,
t ) < \eta(\s, t)$.

- We then have to show that, if $b$ is a redex say $(\l z. b_1 \
b_2 )$, then $A[u,1]=(\l z. (\l y. a \ N(1) \ b_1) \ b_2) \in SN$.
Let $u_2=\t(t_2)$ where $t_2$ is the same as $t$ but where we have
given to the variable $H$ the fresh name $z$, $\t$ is the same as
$\s$ for the variables in $dom(\s)$ and $\t(z)=\s(A[H,1])$. By the
IH, $u_2 \in SN$. But $u_2=(\l z. (\l y.a \ b_1) \ b_2 \
\overrightarrow{N})$ and thus $u_3=(\l z. (\l y.a \ b_1) \ b_2 \ \
N(1)) \in SN$.  Since $u_3$ reduces to $A[u,1]$ by using twice by
the $\g$ rule, it follows that $A[u,1] \in SN$.
  \item If $H$ is a variable and $\s(H)$ is $\g$-head reducible, then
  $C[\s(t)]=\t(t')$ where $t'$ is the
same as $t$ but where we have given to the variable $H$  the fresh
name $z$ and $\t$ is the same as $\s$ for the variables in
$dom(\s)$ and $\t(z)=\s(C[H])$. The result follows then from the
IH.

\end{enumerate}

\item Assume that $\s(t)$ is $\b$-head reducible. We have to show that $Arg[\s(t)]\in SN$ and
 that $B[\s(t)] \in SN$. There are 3 cases to consider.

\begin{enumerate}
  \item If $H$ is an abstraction, the result follows immediately from
the IH since then $Arg[\s(t)]= \s(Arg[t])$ and
$B[\s(t)]=\s(B[t])$.

  \item If $H$ is a variable  and $\s(H)=\l y. v$
for some $v$. Then $Arg[\s(t)]= N(1) \in SN $ by the IH and
$B[\s(t)]=(v[y:=N(1)] \ tail(\overrightarrow{N}))=(z \
tail(\overrightarrow{N}))[z:=v[y:=N(1)]]$. By the claim, $(z \
tail(\overrightarrow{N})) \in SN$. By the IH, $v[y:=N(1)] \in SN$
since  $type(N(1)) < type(\s)$.  Finally the IH implies that
$B[\s(t)] \in SN$ since $type(v) < type(\s)$.

  \item $H$ is a variable  and $\s(H)=(R
\ \overrightarrow{M'})$ where $R$ is a $\b$-redex. Then
$Arg[\s(t)]=Arg[\s(H)] \in SN$ and $B[\s(t)]=(R' \
\overrightarrow{M'} \ \overrightarrow{N})$ where $R'$ is the
reduct of $R$. But then $B[\s(t)] =\t(t')$  and $t'$ is the same
as $t$ but where we have given to the variable $H$  the fresh name
$z$ and $\t$ is the same as $\s$ for the variables in $dom(\s)$
and $\t(z)=(R' \ \overrightarrow{M'})$. We conclude by the IH
since $\eta(\t,t') < \eta(\s,t)$.
\end{enumerate}

\item We, finally, have to show that, for each $i$, $A[\s(t), i] \in
SN$. There are again 3 cases to consider.

\begin{enumerate}

  \item If the redex put in head position is some $N(j)$ and
  $M(j)$ was already a redex. Then $A[\s(t), i]=\s(A[t,j])$ and
  the result follows from the IH.

  \item If the redex put in head position is some $N(j)$ and
  $M(j)=(x \ a)$ and $\s(x)=\l y. b$ then $A[\s(t), i]= \l y. (\s(H) \
N(1) \ ... \ N(j-1) \ b) \ \s(a) \ N(j+1) \ ... \ N(n))$. Since
$type(\s(a)) < type(\s)$ it is enough, by the IH, to show that $\l
y. (\s(H) \ N(1) \ ... \ N(j-1) \ b) \ y \ N(j+1) \ ... \ N(n))$
and so, by Lemma \ref{prepa2}, that $(\s(H) \ N(1) \ ... \ N(j-1)
\ b \ N(j+1) \ ... \ N(n)) \in SN$. Since $type(b) < type(\s)$ it
is enough to show $u=(\s(H) \ N(1) \ ... \ N(j-1) \ z \ N(j+1) \
... \ N(n)) \in SN$. Let $t'= (H \ \overrightarrow{M'})$ where
$\overrightarrow{M'}$ is defined by $M'(k)=M(k)$, for $k \neq j$,
$M'(j)=z$.  Since $t= t'[z:=(x \ a)]$ and $u=\s(t')$ the result
follows from Lemma \ref{prepa} and the IH.

\item If,  finally, $H$ is a variable, $\s(H)=(H' \
\overrightarrow{M'})$ and the redex put in head position is some
$M'(j)$. Then, $A[\s(t),i]=\t(A[t',j])$ where $t'$ is the same as
$t$ but where we have given to the variable $H$ the fresh variable
$z$ and $\t$ is the same as $\s$ for the variables in $dom(\s)$
and $\t(z)=A[\s(H),j]$. We conclude by the IH since $\eta(\t,t') <
\eta(\s,t)$.

  \end{enumerate}

\end{enumerate}
\end{proof}

\begin{corollary}\label{cor}
Let $t$ be a typable  term. Then $t$ is strongly normalizing.
\end{corollary}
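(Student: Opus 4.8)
The plan is to prove the corollary by induction on the typing derivation of $\Gamma \vdash t : A$, establishing that every typable term $t$ lies in $SN$ for the combined reduction $\triangleright$ (i.e. for $\b$, $\d$, $\g$ and $assoc$ simultaneously). The cases in which the typing rule does not alter the underlying term are immediate: if $t$ is a variable it is trivially $SN$; if $t = \lambda x. u$ then by the IH $u \in SN$ and an abstraction is $SN$ exactly when its body is; and both $\wedge$-introduction and $\wedge$-elimination leave $t$ unchanged, so the IH applies verbatim. The entire content of the argument is therefore concentrated in the application rule.

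So suppose $\Gamma \vdash (M\;N) : B$ is derived from $\Gamma \vdash M : A \f B$ and $\Gamma \vdash N : A$. By the IH, $M \in SN$ and $N \in SN$, and I want to conclude $(M\;N) \in SN$. The tool for this is Theorem~\ref{main}, but its fairness hypothesis forbids substituting $M$ and $N$ at once, since $A \f B$ and $A$ differ in general. I would instead proceed in two single-variable steps, each of which is trivially fair. Introduce fresh variables $y$ of type $A \f B$ and $z$ of type $A$. The term $(y\;z)$ is patently $SN$: its head is a variable and its sole argument is a variable, so no $\b$-, $\d$-, $\g$- or $assoc$-redex is present or can be created. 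Applying Theorem~\ref{main} with the fair $SN$ substitution $[y := M]$ gives $(M\;z) \in SN$; applying it once more with $[z := N]$ gives $(M\;z)[z := N] = (M\;N) \in SN$, which closes the induction.

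The step I expect to be the only real obstacle is precisely this application case, and the subtlety there is entirely the fairness restriction in Theorem~\ref{main}: one is tempted to write $(M\;N)$ directly as $\s\bigl((y\;z)\bigr)$ for the simultaneous substitution $\s = [y := M,\, z := N]$, but that $\s$ is not fair. Splitting the substitution into two stages, routed through the auxiliary variable term $(y\;z)$, is what makes the fairness hypothesis harmless, and it is the heart of the reduction of the corollary to the substitution theorem. Everything else is bookkeeping over the typing rules.
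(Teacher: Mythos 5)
Your proposal is correct and matches the paper's own argument: the paper likewise reduces the application case to Theorem~\ref{main} by writing $(u\ v)=(x\ y)[x:=u][y:=v]$, i.e.\ exactly your two-stage, single-variable (hence fair) substitution into the trivially $SN$ term $(x\ y)$. The only cosmetic difference is that the paper inducts on $size(t)$ rather than on the typing derivation, which does not change the substance.
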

\begin{proof}
By induction on $size(t).$ If $t$ is an abstraction or a variable
the result is trivial.  Otherwise $t=(u\ v)=(x\ y)[x:=u][y:=v]$
and the result follows immediately  from Theorem \ref{main} and
the IH.
\end{proof}

\end{document}